\title
{Geometrical formality of solvmanifolds and solvable Lie type geometries}
\author{Hisashi Kasuya}
\newcommand{\C}{\mathbb{C}}
\newcommand{\R}{\mathbb{R}}
\newcommand{\Q}{\mathbb{Q}}
\newcommand{\Z}{\mathbb{Z}}
\newcommand{\g}{\frak{g}}
\theoremstyle{plain}
\newtheorem{theorem}{Theorem}[section]
\theoremstyle{plain}
\newtheorem{remark}{Remark}[section]
\theoremstyle{lemma}
\theoremstyle{definition}
\newtheorem{definition}[theorem]{Definition}
\theoremstyle{proposition}
\theoremstyle{corollary}
\newtheorem{corollary}[theorem]{Corollary}
\theoremstyle{remark}
\newtheorem{example}{\bf Example}
\begin{document} 

\maketitle
\begin{abstract}
We show that for a Lie group $G=\R^{n}\ltimes_{\phi} \R^{m}$ with a semisimple action $\phi$ which has a cocompact discrete subgroup $\Gamma$, the solvmanifold $G/\Gamma$   admits a canonical
invariant formal (i.e.  all products of harmonic forms are again harmonic) metric.
We show that a compact oriented aspherical manifold of dimension less than or equal to $4$ with the virtually solvable fundamental group admits a formal metric if and only if it is diffeomorphic to a torus or an infra-solvmanifold which is not a nilmanifold.
\end{abstract}
\section{Introduction}
Let $(M, g)$ be a compact oriented Riemannian $n$-manifold.
We call $g$ formal if all products of harmonic forms are again harmonic.  
If a compact oriented manifold admits a formal Riemaniann metric, we call it geometrically formal.
If $g$ is formal, then the space of the harmonic forms is a subalgebra of the de Rham complex of $M$ and isomorphic to the real cohomology of $M$.
By this, a geometrically formal manifold is a formal space (in the sense of Sullivan \cite{Sul}).
But the converse is not true (see \cite{Kot} \cite{KT}).
For very simple examples, closed surfaces with genus$\ge2$ are formal but not geometrically formal.
Thus we have one problem of geometrical formality of formal spaces.
Kotschick's nice  work in \cite{Kot} stimulates us  to consider this problem.

In this paper we prove the following theorem by using  computations of the de Rham cohomology of general solvmanifolds given in \cite{K2}.
\begin{theorem}\label{MT}
Let $G=\R^{n}\ltimes_{\phi} \R^{m}$ with a semisimple action $\phi$.
Suppose $G$ has a lattice $\Gamma$.
Then $G/\Gamma$ admits an invariant formal metric.
\end{theorem}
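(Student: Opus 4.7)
The plan is to exhibit a finite-dimensional sub-DGA of $\Omega^{*}(G/\Gamma)\otimes\C$ that computes the de Rham cohomology and whose elements are all harmonic for a suitable invariant metric. Since this sub-DGA will be closed under the wedge product and, by a dimension count, coincide with the full space of harmonic forms, formality follows.

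First I would diagonalize $\phi$ over $\C$: since $\phi$ is semisimple, there is a basis $y_{1},\dots,y_{m}$ of $\R^{m}\otimes\C$ and characters $\alpha_{j}(x)=e^{\lambda_{j}(x)}$, $\lambda_{j}\in(\R^{n})^{*}\otimes\C$, with $\phi(x)y_{j}=\alpha_{j}(x)y_{j}$.  This produces left-invariant complex $1$-forms $\xi_{j}=\alpha_{j}(x)^{-1}\,dy_{j}$ on $G_{\C}$, together with the real left-invariant $1$-forms $dx_{1},\dots,dx_{n}$ on $G$.  Because $G$ has a lattice, $G$ is unimodular, and hence $\sum_{j}\lambda_{j}=0$.

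Next I would take the invariant metric for which $\{dx_{i},\xi_{j}\}$ is Hermitian orthonormal (after pairing complex-conjugate weights to obtain a genuine Riemannian metric descending to $G/\Gamma$).  For a monomial $dx_{I}\wedge\xi_{J}$, write $\Lambda_{J}=\sum_{j\in J}\lambda_{j}$; the basic computation yields $d\xi_{J}=-d\Lambda_{J}\wedge\xi_{J}$, so closedness of $dx_{I}\wedge\xi_{J}$ is equivalent to $d\Lambda_{J}\in\mathrm{span}\{dx_{i}:i\in I\}$.  Unimodularity gives $\Lambda_{J^{c}}=-\Lambda_{J}$, and the Hodge star sends $dx_{I}\wedge\xi_{J}$ to a scalar multiple of $dx_{I^{c}}\wedge\xi_{J^{c}}$, so co-closedness translates into $d\Lambda_{J}\in\mathrm{span}\{dx_{i}:i\in I^{c}\}$.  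Since $I\cap I^{c}=\emptyset$, harmonicity forces $\Lambda_{J}=0$.

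I would then invoke the cohomology computation of \cite{K2} to identify a finite-dimensional sub-DGA $A^{*}\subset\Omega^{*}(G/\Gamma)\otimes\C$, generated after a suitable twisting by the $dx_{i}$ and by those $\xi_{J}$ with $\Lambda_{J}=0$, such that $A^{*}\hookrightarrow\Omega^{*}(G/\Gamma)\otimes\C$ is a quasi-isomorphism.  By the previous step $A^{*}$ lies inside the space of harmonic forms and, by dimension count, exhausts it.  Since $\Lambda_{J_{1}\cup J_{2}}=\Lambda_{J_{1}}+\Lambda_{J_{2}}$ for disjoint $J_{1},J_{2}$, the condition $\Lambda_{J}=0$ is additive, so $A^{*}$ is closed under the wedge product and formality follows.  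The hard part will be the descent to $G/\Gamma$: the $\xi_{j}$ are complex, the right $\Gamma$-action on left-invariant forms is not diagonal, and the naive Hermitian metric is not right-$\Gamma$-invariant, so the correct sub-DGA and the correct $\Gamma$-invariant modification of the metric require the twisting construction of \cite{K2} using characters of $\R^{n}$ adapted to the lattice, and complex-conjugate pairs of weights must be combined to yield a genuine Riemannian metric on the quotient.
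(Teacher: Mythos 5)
Your overall architecture is the paper's: the same invariant metric (the Hermitian metric making the weight basis orthonormal), the same use of unimodularity together with the Hodge star, and the same appeal to \cite{K2} for a quasi-isomorphic finite-dimensional sub-DGA. But there is a genuine gap in your identification of that sub-DGA and of the harmonic forms. The condition you derive, $\Lambda_{J}=0$, says that the character $\alpha_{J}=e^{\Lambda_{J}}$ is trivial on all of $G$; the sub-DGA $A^{\ast}$ of Theorem \ref{isoc} is instead spanned by the \emph{twisted} monomials $\alpha_{J}\,dx_{I}\wedge\xi_{J}$ for which $\alpha_{J}$ is trivial merely on $\Gamma$. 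These forms are not left-invariant, so your harmonicity computation (a computation on $\bigwedge\g^{\ast}$) says nothing about them, and your dimension count breaks down whenever $G$ carries a non-trivial character that is trivial on the lattice. This really happens: for $G=\C\ltimes_{\phi}\C^{2}$ with $\phi(z)=\mathrm{diag}(e^{z},e^{-z})$ (Example \ref{CCC}), products such as $e^{z-\bar z}$ can restrict trivially to $\Gamma$ while the corresponding $\Lambda_{J}\neq 0$ --- which is exactly why the Betti numbers there depend on the choice of lattice. In such cases the span of your monomials with $\Lambda_{J}=0$ has dimension strictly smaller than $b_{p}$, so it cannot exhaust the harmonic forms, and the harmonicity of the actual generators of $A^{\ast}$ is left unproved.

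The missing idea is the one step of the paper's proof that handles the twist: since $\alpha_{J}$ restricts trivially to $\Gamma$, it factors through the compact quotient $G/\Gamma$, so its image is a compact subgroup of $\C^{\ast}$ and $\alpha_{J}$ is unitary; combined with unimodularity ($\prod_{i}\alpha_{i}=1$) this gives $\bar\alpha_{J}=\alpha_{J}^{-1}=\alpha_{J^{c}}$, whence $\ast_{g}\bigl(\alpha_{J}\,dx_{I}\wedge\xi_{J}\bigr)=\pm\,\alpha_{J^{c}}\,dx_{I^{c}}\wedge\xi_{J^{c}}$ lies again in $A^{\ast}$. Because ${\bf U}_{G}$ is abelian the differential vanishes identically on $A^{\ast}$, so the $\ast_{g}$-invariance of $A^{\ast}$ alone forces $\delta=0$ there; no monomial-by-monomial analysis of $d$ and $\delta$ is needed, and no modification of the metric is required either --- the difficulty you flag about $\Gamma$-invariance of the metric is not where the problem lies. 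Your $\Lambda_{J}=0$ forms are the special case of trivial twist; the theorem needs the general case, and with your current description of $A^{\ast}$ the argument does not close.
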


We also study geometrical formality of low-dimensional aspherical  manifolds with the virtually  solvable fundamental groups.
We consider infra-solvmanifolds which are quotient spaces of simply connected solvable Lie groups by subgroups of the groups of the affine transformations of $G$ satisfying some conditions (see Section $7$ for the definition). 
We classify geometrically formal compact aspherical manifolds of dimension less than or equal to $4$ with the virtually solvable fundamental groups.
\begin{theorem}
Let $M$ be a compact oriented aspherical manifold of dimension less than or equal to $4$ with the virtually solvable fundamental group.
Then  $M$ is geometrically formal if and only if $M$ is diffeomorphic to a torus or an infra-solvmanifold which is not a nilmanifold.
\end{theorem}

\section{Notation and conventions}

Let $k$ be a subfield of $\C$.
A group $\bf G$ is called a $k$-algebraic group if $\bf G$ is a Zariski-closed subgroup of $GL_{n}(\C)$ which is defined by polynomials with coefficients in $k$.
Let  ${\bf G}(k)$ denote the set  of  $k$-points of $\bf G$ 
and ${\bf U}({\bf G})$ the maximal Zariski-closed unipotent normal $k$-subgroup of $\bf G$ called the unipotent radical of $\bf G$.
Denote $U_{n}(k)$ the group of $k$-valued upper triangular unipotent matrices of size $n$.

\section{Unipotent hull of solvable Lie group}\label{Hul}
\begin{theorem}{\rm (\cite{R})}\label{ttt}
Let $G$ be a simply connected solvable Lie group.
Then there exists a unique $\R$-algebraic group ${\bf H}_{G}$ with an injective group homomorphism $\psi :G\to {\bf H}_{G}(\R)  $ 
so that:
\\
$(1)$  \ $\psi (G)$ is Zariski-dense in ${\bf H}_{G}$.\\
$(2)$ \    The centralizer $Z_{{\bf H}_{G}}({\bf U}({\bf H}_{G}))$ of ${\bf U}({\bf H}_{G})$ is contained in  ${\bf U}({\bf H}_{G})$.\\
$(3)$ \ $\dim {\bf U}({\bf H}_{G})$=${\rm dim}\,G$(resp. ${\rm rank}\, G$).   
\end{theorem}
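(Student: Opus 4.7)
The plan is to construct ${\bf H}_{G}$ explicitly as a semidirect product $T\ltimes U$ of an algebraic torus by a unipotent algebraic group, using Jordan decomposition of the adjoint representation of $G$.

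First, I would consider the adjoint representation ${\rm Ad}\colon G\to GL(\g)$ and its Zariski closure $A\subset GL(\g)$. Since $G$ is solvable, $A$ is a solvable algebraic group, admitting a splitting $A=T_{0}\ltimes {\bf U}(A)$ with $T_{0}$ a maximal algebraic torus. For each $g\in G$ the Jordan decomposition in $A$ gives ${\rm Ad}(g)=s(g)u(g)$ with $s(g)$ semisimple, $u(g)$ unipotent, and commuting. The first key technical step, which rests essentially on the solvability of $G$, is to verify that the assignment $s\colon G\to T_{0}$ is itself a Lie group homomorphism.

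Next, let $T$ be the Zariski closure of $s(G)$ in $T_{0}$. To produce the unipotent part, I would build a nilpotent Lie algebra $\mathfrak{u}$ of dimension $\dim G$ by taking the underlying vector space of $\g$ and twisting the bracket by the semisimple part of ${\rm ad}$; the residual bracket is nilpotent precisely because the semisimple eigenspace data have now been separately recorded by $T$. Let $U$ be the corresponding simply connected unipotent algebraic group, set ${\bf H}_{G}:=T\ltimes U$ with $T$ acting on $U$ through its natural action on $\mathfrak{u}$, and define $\psi(g):=(s(g),\overline{g})$, where $\overline{g}\in U(\R)$ is the Mal'cev exponentiation of $g$.

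Property (3) and Zariski density in (1) are then bookkeeping: $s(G)$ is Zariski dense in $T$ by construction, the image of $G$ in $U(\R)$ is Zariski dense by the Mal'cev theory applied to the nilpotent twist, and $\dim U=\dim G$ by construction. The main obstacle is condition (2): one must show that no nontrivial element of $T$ centralizes all of $U$, which reduces to showing that $T$ has no nonzero fixed vectors on $\mathfrak{u}/[\mathfrak{u},\mathfrak{u}]$. This in turn should hold because $T$ was built from precisely the nontrivial semisimple eigendata of ${\rm Ad}$, so any would-be fixed vector would correspond to a trivial eigenvalue that was already absorbed into $U$. Uniqueness up to isomorphism finally follows from (1) and (2) by a density-plus-centralizer argument: Zariski density forces any algebraic extension of the identity on $\psi(G)$ to be unique, and the centralizer condition rules out nontrivial automorphisms of ${\bf H}_{G}$ that fix $\psi(G)$ pointwise.
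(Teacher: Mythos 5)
First, a caveat: the paper does not prove this statement at all --- it is quoted from Raghunathan's book --- so there is no in-paper argument to compare against, and your outline must be judged against the standard construction. What you describe is the ``semisimple splitting / nilshadow'' construction of the algebraic hull, which is indeed the route taken in the literature, but as written it has two genuine gaps. The more serious one is that you never check that $\psi(g)=(s(g),\overline{g})$ is a group homomorphism. Writing down a twisted bracket on $\g$ and exponentiating gives a unipotent group $U$ of the right dimension, but the whole content of the semisimple splitting theorem is that the original multiplication of $G$ is recovered inside $T\ltimes U$, i.e.\ that $\overline{gh}$ equals the $U$-component of $(s(g),\overline{g})\cdot(s(h),\overline{h})$. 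This compatibility is exactly what dictates the precise form of the twisted bracket, and it is also where one actually proves that the twisted bracket is nilpotent (which you assert rather than derive); without it, $\psi$ is merely an injective map of sets and property (1) has no content. Relatedly, $s$ should be taken as the composite $G\to A\to A/{\bf U}(A)\cong T_{0}$ rather than the pointwise Jordan semisimple part, since the latter need not land in a single fixed torus; the homomorphism property is then automatic, but identifying this projection with the semisimple parts requires a (short) argument.

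The second gap is your criterion for condition (2). For $H=T\ltimes U$ one has $Z_{H}(U)=\ker\bigl(T\to\mathrm{Aut}(U)\bigr)\times Z(U)$, because a semisimple automorphism of $U$ that coincides with an inner (hence unipotent) automorphism must be trivial; so what must be shown is that $T$ acts \emph{faithfully} on $U$, not that $T$ has no nonzero fixed vectors on ${\frak u}/[{\frak u},{\frak u}]$. These conditions are genuinely different: a single nontrivial $t\in T$ can act trivially even when the common fixed subspace of all of $T$ is zero, and conversely ${\frak u}/[{\frak u},{\frak u}]$ will typically contain $T$-fixed vectors (all directions on which ${\rm Ad}_{s}$ is trivial), so your proposed reduction fails already in the nilpotent case $T=\{e\}$, ${\frak u}=\g$, where (2) nevertheless holds. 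The repair is easy in your setup --- $T$ is by construction a subgroup of $GL(\g)=GL({\frak u})$ and hence acts faithfully --- but the argument as stated aims at the wrong target. Finally, the uniqueness claim needs more than ``density plus centralizer'': rigidity of an already-given isomorphism is the easy half, and the real work is to produce an isomorphism between two candidate hulls in the first place, which uses (2) and (3) together with the structure theory of solvable algebraic groups.
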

We denote ${\bf U}_{G}={\bf U}({\bf H}_{G})$.
\begin{theorem}\label{abab} {\rm (\cite{K})}
 Let $G$ be a simply connected solvable Lie group.
Then ${\bf U}_{G}$ is abelian if and only if  $G=\R^{n}\ltimes_{\phi} \R^{m}$ such that the action $\phi:\R^{n}\to {\rm  Aut} (\R^{m})$ is semisimple.
\end{theorem}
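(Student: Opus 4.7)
My plan is to attack each implication via the Mostow decomposition
\[
\mathbf{H}_{G}=\mathbf{T}\ltimes\mathbf{U}_{G}
\]
of the algebraic hull, where $\mathbf{T}$ is a Levi subgroup. Since $\psi(G)$ is Zariski dense in the solvable group $\mathbf{H}_{G}$, the reductive group $\mathbf{T}$ is itself solvable, hence an algebraic torus. This torus structure of $\mathbf{T}$ is the mechanism that ultimately forces the semisimple action in the statement.

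\textbf{Sufficiency.} Assume $G=\R^{n}\ltimes_{\phi}\R^{m}$ with $\phi$ semisimple, and let $\mathbf{T}\subset GL_{m}(\C)$ be the Zariski closure of $\phi(\R^{n})$. Because $\phi(\R^{n})$ is a commuting family of semisimple matrices, $\mathbf{T}$ is an algebraic torus. I would let $\mathbf{T}$ act on $\mathbf{G}_{a}^{n}\times\mathbf{G}_{a}^{m}$ trivially on the first factor and via the given inclusion on the second, set $\mathbf{H}_{G}=\mathbf{T}\ltimes(\mathbf{G}_{a}^{n}\times\mathbf{G}_{a}^{m})$, and define the embedding by $\psi(x,v)=(\phi(x),(x,v))$. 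Verifying the three defining properties of Theorem \ref{ttt} is then routine: Zariski density because $\psi(G)$ projects onto $\mathbf{G}_{a}^{n+m}$ and the graph of $\phi$ is Zariski dense in $\mathbf{T}\times\mathbf{G}_{a}^{n}$ by the transcendental nature of $\phi$; the centralizer condition because $\mathbf{T}$ acts faithfully on $\mathbf{G}_{a}^{m}$ by construction; and $\dim\mathbf{U}_{G}=n+m=\dim G$. The unipotent radical $\mathbf{U}_{G}=\mathbf{G}_{a}^{n+m}$ is manifestly abelian.

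\textbf{Necessity.} Assume $\mathbf{U}_{G}$ is abelian. Raghunathan's construction embeds the nilradical $N$ of $G$ into $\mathbf{U}_{G}(\R)$, so $N$ inherits abelianness and $N\cong\R^{m}$. Appealing to Mostow's splitting theorem for simply connected solvable Lie groups, I would write $G=A\ltimes N$ with $A\cong\R^{n}$ abelian. For $a\in A$, write $\psi(a)=(t(a),u(a))\in\mathbf{T}(\R)\ltimes\mathbf{U}_{G}(\R)$. A direct computation in the semidirect product, using that $\mathbf{U}_{G}$ is abelian so that $u(a)$ commutes with $\psi(N)\subset\mathbf{U}_{G}(\R)$, shows that the conjugation action of $\psi(a)$ on $\psi(N)$ reduces to the linear action of $t(a)$ alone. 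Since $t(a)$ lies in the torus $\mathbf{T}$, this action is semisimple, and under the identification $\psi(N)\cong N$ it is precisely $\phi(a)$.

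\textbf{Main obstacle.} The delicate inputs are both structural and external to this excerpt: first, the property of Raghunathan's $\psi$ that it carries the nilradical into $\mathbf{U}_{G}$, which is essential for concluding that $N$ is abelian; second, Mostow's existence of an abelian complement $A$ to $N$ in $G$, which turns the abstract decomposition of the algebraic hull into a concrete semidirect product description of $G$. Once these two ingredients are in place, the identification of $\phi$ with the induced torus action is a short calculation in $\mathbf{T}\ltimes\mathbf{U}_{G}$.
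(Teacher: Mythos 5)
The paper itself offers no proof of Theorem \ref{abab} --- it is imported from \cite{K} --- so there is nothing internal to compare against; I am judging your argument on its own. Your sufficiency direction is essentially right and is the standard route: exhibit ${\bf T}\ltimes({\bf G}_{a}^{n}\times{\bf G}_{a}^{m})$ with ${\bf T}$ the Zariski closure of $\phi(\R^{n})$, check properties (1)--(3) of Theorem \ref{ttt}, and invoke uniqueness of the hull. The only soft spot is the density claim: the right justification is not ``transcendence of $\phi$'' but that a Zariski-connected subgroup of the commutative group ${\bf T}\times{\bf G}_{a}^{n}$ is a direct product of a subtorus and a subspace, so surjectivity of both projections forces it to be everything. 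That is fixable in a line.

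The necessity direction has a genuine gap at the step ``$G=A\ltimes N$ with $A$ abelian.'' Mostow's splitting theorem does not say this: it embeds $G$ into a larger split group $G_{s}=T\ltimes U_{s}$ (the semisimple splitting, with $U_{s}$ the nilshadow, in general neither $N$ nor $G$), and a simply connected solvable Lie group need not be a semidirect product of an abelian group with its nilradical. Nor does abelianness of $N$ rescue you: the $5$-dimensional algebra with basis $a,b,x,y,z$ and nonzero brackets $[a,x]=x$, $[b,y]=y$, $[a,b]=z$ has abelian nilradical $\langle x,y,z\rangle$, yet any candidate complement $\langle a+n_{1},b+n_{2}\rangle$ has bracket $z+\alpha x-\beta' y\neq 0$ lying in the nilradical, so no complementary subalgebra exists. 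Thus the existence of the splitting is exactly the hard content of this direction and must be extracted from the hypothesis that ${\bf U}_{G}$ is abelian --- e.g.\ by working in $\mathfrak{h}=\mathfrak{t}\ltimes\mathfrak{u}$ with $\mathfrak{u}$ abelian, setting $\mathfrak{m}=\frak{g}\cap\mathfrak{u}\supset[\frak{g},\frak{g}]$, noting that $\frak{g}$ acts on $\mathfrak{m}$ through the semisimple operators ${\rm ad}_{X_{\mathfrak{t}}}$, and then killing the extension class of $0\to\mathfrak{m}\to\frak{g}\to\frak{g}/\mathfrak{m}\to 0$ via the weight decomposition (vanishing of $H^{2}$ in nonzero weights) together with the centralizer condition (2) to dispose of the zero-weight part. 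Your closing computation --- that conjugation by $\psi(a)=(t(a),u(a))$ on $\psi(N)$ reduces to the semisimple action of $t(a)$ because $u(a)$ lies in the abelian ${\bf U}_{G}$ --- is correct, but it is the easy half of the direction once the splitting is in hand.
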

\section{Hodge theory}
Let $(V, g) $ be a $\R$ or $\C$-vector space of dimension $n$ with an inner product $g$.
Let $\bigwedge V =\bigoplus_{p=0} \bigwedge^{p} V$ be the exterior algebra of $V$.
We extend $g$ to the inner product on  $\bigwedge V$.
Take $vol \in  \bigwedge^{n} V$ such that $g(vol, vol)=1$.
We define the linear map $\ast_{g}:\bigwedge^{p} V\to \bigwedge^{n-p} V$ as:
\[v \wedge \ast_{g}\bar u=g(v,u)vol\]
Let $\{\theta_{1},\dots \theta_{n}\}$ be an orthonormal basis of  $(V, g) $.
Then we have 
\[\ast_{g}(\theta_{i_{1}}\wedge\dots \theta_{i_{p}})=({\rm sgn}\sigma_{IJ}) \theta_{j_{1}}\wedge\dots \theta_{j_{n-p}}\]
 where  $J=\{j_{1}, \cdots ,j_{n-p}\}$ is the complement of $I=\{i_{1},\dots ,i_{p}\}$ in $\{1,\dots, n\}$ and $\sigma_{IJ}$ is the permutation $\begin{pmatrix}
1 \cdots p& p+1 \cdots  n \\
i_{1} \cdots i_{p}&j_{1} \cdots j_{n-p}
\end{pmatrix}
$.

Let $(M, g)$ be a compact oriented Riemannian $n$-manifold.
Let $(A^{\ast}(M), d)$ be the de Rham complex of $M$ with the exterior derivation $d$.
For $x\in M$ by the inner product $g_{x}$ on $T_{x}M$ we define the linear map $\ast_{g}:A^{p}(M)\to A^{n-p}(M)$ by 
\[(\ast_{g}(\omega))_{x}=\ast_{g_{x}}\omega_{x}    \]
for $\omega \in A^{p}(M)$.
Define  $\delta :A^{p}(M)\to A^{p-1}(M)$ by $\delta =(-1)^{np+n+1}\ast_{g} d\ast_{g}$.
We call $\omega\in A^{p}(M)$ harmonic if $d\omega =0$ and $\delta \omega =0$.
Let ${\mathcal H}^{p}(M)$ be the subspace of $A^{p}(M)$ which consists of harmonic $p$-forms.
Let ${\mathcal H}(M)=\bigoplus{\mathcal H}^{p}(M)$.
It is known that the inclusion ${\mathcal H}(M)\subset A^{\ast}(M)$ induces an isomorphism
\[{\mathcal H}^{p}(M)\cong H^{p}(M,\R).\]
In general a wedge product of harmonic forms is not harmonic and so ${\mathcal H}^{p}(M)$ is not a subalgebra of $A^{\ast}(M)$.
\begin{definition}
We call  a Riemannian metric $g$ formal if all products of harmonic forms are again harmonic.
We call an oriented compact manifold $M$ geometrical formal if $M$ admits a formal metric.
\end{definition}

\section{Invariant forms on solvmanifolds (proof of Theorem \ref{MT})}
Let $G$ be a simply connected solvable Lie group and $\g$  the Lie algebra which is the space of the left invariant vector fields on $G$.
Consider the exterior algebra $\bigwedge \g^{\ast} $ of the dual space of $\g$.
Denote $d:\bigwedge^{1}\g^{\ast}\to \bigwedge^{2}\g^{\ast}$ the dual map of the Lie bracket of $\g$ and  $d:\bigwedge^{p}\g^{\ast}\to \bigwedge^{p+1}\g^{\ast}$ the extension of this map.
We can identify  $(\bigwedge \g^{\ast}, d)$ with the left invariant forms on $G$ with the exterior derivation.
Let ${\rm Ad}: G\to {\rm Aut}(\g)$ be the adjoint representation.
Representations of $G$ are trigonalizable in $\C$ by Lie's theorem.
We define the diagonal representation ${\rm Ad}_{s}:G\to {\rm Aut}(\g_{\C}) $ as the diagonal entries of a triangulation of $\rm Ad$.
Let $X_{1},\cdots ,X_{n}$ be a basis of $\g_{\C}$ such that ${\rm Ad}_{s}$ is represented by diagonal matrices.
Then we have ${\rm Ad}_{sg}X_{i}=\alpha_{i}(g)X_{i}$ for characters $\alpha_{i}$ of $G$.
Let $x_{1},\dots,x_{n}$ be tha dual basis of $X_{1},\dots ,X_{n}$.
We assume that $G$ has a lattice $\Gamma$.
Define the sub-DGA $A^{\ast}$ of the de Rham complex $A^{\ast}_{\C}(G/\Gamma)$ as

\begin{multline*}
A^{p}
=\left\langle \alpha_{i_{1}\dots i_{p}} x_{i_{1}}\wedge \dots \wedge x_{i_{p}} {\Big \vert} \begin{array}{cc}1\le i_{1}<i_{2}<\dots <i_{p}\le n,\\  {\rm the \, \,  restriction} \, \,  of \, \, \alpha_{i_{1}\dots i_{p}}\, \, {\rm on  \, \, \Gamma \, \, is\, \,  trivial}\end{array}\right\rangle
\end{multline*}
where $\alpha_{i_{1}\dots i_{p}}=\alpha_{i_{1}}\dots \alpha_{i_{p}}$. 

\begin{theorem}\label{isoc}{\rm (\cite[v4. Corollary 7.6]{K2})}
The  inclusion
\[ A^{\ast}\subset A^{\ast}_{\C}(G/\Gamma)\]
induces a cohomology isomorphism and
$A^{\ast}$ can be considered as a sub-DGA of $\bigwedge {\frak u}^{\ast}$ where ${\frak u}$ is the Lie algebra of ${\bf U}_{G}$ as in Section \ref{Hul}.

\end{theorem}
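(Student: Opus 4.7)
The plan is to establish the two parts of the theorem: (i) that $A^{\ast}$ is a sub-DGA of $A^{\ast}_{\C}(G/\Gamma)$ whose inclusion induces a cohomology isomorphism, and (ii) that $A^{\ast}$ embeds as a sub-DGA into the Chevalley-Eilenberg complex $\bigwedge \frak{u}^{\ast}$.

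First I would verify that $A^{\ast}$ is a well-defined sub-DGA. Each generator $\alpha_{i_{1}\dots i_{p}}\, x_{i_{1}}\wedge\dots\wedge x_{i_{p}}$ descends to the quotient because $x_{i_{1}}\wedge\dots\wedge x_{i_{p}}$ is left-invariant and $\alpha_{i_{1}\dots i_{p}}$ restricts trivially to $\Gamma$, so the product is $\Gamma$-invariant under right translation. Closure under the wedge product is immediate from $\alpha_{I}\alpha_{J}=\alpha_{I\cup J}$. Closure under $d$ follows from the Maurer-Cartan formula together with the fact that ${\rm Ad}_{s}$ acts as a derivation on the bracket: any nonzero structure constant $c^{k}_{ij}$ in $[X_{i},X_{j}]=\sum c^{k}_{ij}X_{k}$ forces $\alpha_{i}\alpha_{j}=\alpha_{k}$, so every term appearing in $d(\alpha_{I}x_{I})$ carries the character $\alpha_{I}$ and therefore lies in $A^{\ast}$.

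For the cohomology isomorphism, my approach would be a Mostow-type reduction. Take the Mostow fibration $N/(N\cap\Gamma)\to G/\Gamma\to G/N\Gamma$ with $N$ the nilradical, so the fiber is a nilmanifold and the base a torus. On the fiber, Nomizu's theorem identifies the cohomology with invariant forms. I would then analyze the Leray-Serre spectral sequence, identifying the semisimple monodromy of $G/N$ on the fiber cohomology as the action encoded by the characters $\alpha_{i}$: the $\Gamma$-invariant subcomplex of the $E_{2}$-term is precisely $A^{\ast}$, and degeneration then yields $H^{\ast}(A^{\ast})\cong H^{\ast}(G/\Gamma;\C)$. An alternative route is to pass through the algebraic hull ${\bf H}_{G}$ and interpret $A^{\ast}$ as invariants for a suitable $\R$-split torus action on a larger invariant complex, then reduce by a Koszul/averaging argument.

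For the embedding $A^{\ast}\hookrightarrow\bigwedge\frak{u}^{\ast}$, Theorem \ref{ttt}(3) gives $\dim\frak{u}=\dim\g=n$, so both exterior algebras have the same total $\C$-dimension. The unipotent hull ${\bf U}_{G}$ is built precisely so that the diagonal weights of ${\rm Ad}$ are absorbed into the unipotent structure of $\frak{u}$; the map sending $\alpha_{I}x_{I}$ to the corresponding Chevalley-Eilenberg cochain on $\frak{u}$ is then compatible with $d$ because the CE-differential on $\frak{u}$ encodes the structure constants of $\g$ with the $\alpha_{i}$-weights stripped off, while the twisting factor $\alpha_{I}$ on the solvmanifold side reinstates them. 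The main obstacle is the cohomology isomorphism in the non-completely-solvable case: controlling the Leray-Serre spectral sequence, ensuring it degenerates, and confirming that every cohomology class of $G/\Gamma$ admits a representative in $A^{\ast}$ require delicate analysis of the semisimple monodromy and constitute the technical core of \cite{K2}.
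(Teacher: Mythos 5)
The paper does not actually prove this statement: Theorem \ref{isoc} is imported verbatim as a black box from \cite[v4, Corollary 7.6]{K2}, so there is no in-paper argument to compare yours against. Judged on its own terms, your proposal correctly handles the algebraic bookkeeping: the descent of $\alpha_{I}x_{I}$ to $G/\Gamma$, closure under wedge via $\alpha_{I}\alpha_{J}=\alpha_{I\cup J}$, closure under $d$ via the $\mathrm{Ad}_{s}$-equivariance of the bracket (a nonzero structure constant $c^{k}_{ij}$ forces $\alpha_{i}\alpha_{j}=\alpha_{k}$), and the dimension count $\dim\frak{u}=\dim\frak{g}$ from Theorem \ref{ttt}(3) underlying the identification with $\bigwedge\frak{u}^{\ast}$. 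These parts are sound, modulo choosing the weight basis adapted to the nilradical.

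The genuine gap is exactly where you flag it: the quasi-isomorphism $H^{\ast}(A^{\ast})\cong H^{\ast}(G/\Gamma;\C)$, which is the entire content of the theorem. The Mostow-fibration/Leray--Serre route you propose does not go through as described: the monodromy of the torus base on the cohomology of the nilmanifold fiber is induced by $\mathrm{Ad}$ of elements of $\Gamma$, which is in general \emph{not} semisimple, so it is not ``the action encoded by the characters $\alpha_{i}$''; moreover degeneration of the spectral sequence at $E_{2}$ is not automatic, and the failure of this naive approach for non-completely-solvable $G$ is precisely why computing $H^{\ast}(G/\Gamma)$ was a longstanding problem (Hattori's theorem covers only the completely solvable case). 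Your closing sentence concedes that this step ``constitutes the technical core of \cite{K2}'' --- which means the proposal does not prove the theorem but defers its essential claim to the very reference being proved. The actual argument of \cite{K2} is closer to your one-line alternative: it works with the algebraic hull ${\bf H}_{G}$ and Mostow's theorem on solvmanifold cohomology with coefficients in the rank-one local systems defined by the characters $\alpha_{I}$, identifying $A^{\ast}$ with the subcomplex of $\bigwedge\frak{u}^{\ast}$ cut out by a maximal reductive (torus) action; to make your write-up into a proof you would need to develop that machinery rather than the fibration spectral sequence.
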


Define $g$  the Hermittian inner product as 
\[ g(X_{i}, X_{j})=\delta_{ij} .\]
Since $ {\rm Ad}_{s}$ is an $\R$-valued representation, the restriction of $g$ on $\g$ is an inner product on $\g$.
We consider $g$ as an invariant Riemannian metric on $G/\Gamma$.
\begin{theorem}
If ${\bf U}_{G}$ is abelian, then $g$ is a formal metric on $G/\Gamma$.
\end{theorem}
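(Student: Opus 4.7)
The plan is to prove that $A^*$ (from Theorem \ref{isoc}) coincides with the space $\mathcal{H}^*(G/\Gamma)$ of $g$-harmonic forms. Since the inclusion $A^* \hookrightarrow A^*_\C(G/\Gamma)$ is a cohomology isomorphism by Theorem \ref{isoc}, and Hodge theory gives $\dim \mathcal{H}^* = \dim H^*(G/\Gamma,\R)$, one has $\dim A^* = \dim \mathcal{H}^*$; hence once the inclusion $A^* \subset \mathcal{H}^*$ is established, equality follows. Because $A^*$ is already a subalgebra of the de Rham complex under wedge product, $\mathcal{H}^*$ will then be a subalgebra too, and this is precisely formality of $g$.

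To prove $A^* \subset \mathcal{H}^*$, I first note that every $\omega \in A^*$ is $d$-closed: by Theorem \ref{abab} the hypothesis is $G = \R^n \ltimes_\phi \R^m$ with $\phi$ semisimple, and by Theorem \ref{isoc} the sub-DGA $A^*$ embeds in $\bigwedge \frak u^*$; since $\frak u$ is abelian, the differential on $\bigwedge \frak u^*$ vanishes identically, so the induced differential on $A^*$ is zero. The remaining work is co-closedness, i.e., $d \ast_g \omega = 0$ for each $\omega = \alpha_I x_I \in A^*$.

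The key auxiliary fact is that every character $\alpha$ of $G$ with $\alpha|_\Gamma = 1$ satisfies $|\alpha(g)| = 1$ for all $g \in G$. Indeed $\alpha$ factors as $e^\lambda \circ \pi$ with $\pi \colon G \to G/\R^m \cong \R^n$ and $\lambda \in {\rm Hom}_\R(\R^n,\C)$; by Mostow's theorem the projection $\pi(\Gamma)$ is a lattice in $\R^n$, so $\lambda(\pi(\Gamma)) \subset 2\pi i \Z$ forces ${\rm Re}(\lambda) \equiv 0$. Next, since ${\rm Ad}_s$ is $\R$-valued, complex conjugation on $\g_\C$ induces an involution $i \mapsto \bar i$ on the index set with $\bar x_i = x_{\bar i}$ and $\alpha_{\bar i} = \bar \alpha_i$. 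Using the defining relation $v \wedge \ast_g \bar u = g(v,u)\,vol$ together with $g(X_i,X_j) = \delta_{ij}$, a direct calculation in the Hermitian orthonormal basis yields $\ast_g x_I = c_I\, x_{\bar I^c}$ for some nonzero scalar $c_I$, where $\bar I^c$ denotes the complement of the conjugated index set $\bar I = \{\bar i : i \in I\}$. Hence $\ast_g(\alpha_I x_I) = c_I\, \alpha_I\, x_{\bar I^c}$.

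Unimodularity of $G$ (automatic since $G$ admits a lattice) gives $\prod_i \alpha_i = 1$, whence $\alpha_{\bar I^c} = 1/\alpha_{\bar I} = 1/\bar\alpha_I$; combined with $|\alpha_I| \equiv 1$ this yields $\alpha_{\bar I^c} = \alpha_I$. Therefore $\ast_g \omega = c_I\, \alpha_{\bar I^c}\, x_{\bar I^c}$ is a scalar multiple of an element of $A^*$, hence closed by the previous step; so $\omega$ is coclosed, as required. The hardest part is this Hodge-star calculation in the complex Hermitian basis --- the naive formula $\ast_g x_I = \pm x_{I^c}$ fails because the $x_i$ are complex eigenvectors of ${\rm Ad}_s$, and the conjugation from the $\R$-structure must be tracked to reach $x_{\bar I^c}$ --- together with the subsequent character matching $\alpha_I = \alpha_{\bar I^c}$, which is exactly where both the modulus-one property (from the lattice) and unimodularity of $G$ come into play, and where the abelian-$\frak u$ hypothesis plays its decisive role by ensuring that $A^*$ is already $d$-closed.
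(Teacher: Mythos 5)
Your proof is correct and follows essentially the same route as the paper: the differential on $A^{\ast}$ vanishes because ${\frak u}$ is abelian, the Hodge star preserves $A^{\ast}$ because the characters are unitary (being trivial on $\Gamma$) and $G$ is unimodular, so every element of $A^{\ast}$ is harmonic and $A^{\ast}={\mathcal H}(G/\Gamma)$ is closed under wedge. The only differences are cosmetic: the paper deduces unitarity directly from compactness of the image $\alpha_{I}(G/\Gamma)\subset\C^{\ast}$ rather than via Mostow's theorem, and it leaves the final dimension-count identification $A^{\ast}={\mathcal H}(G/\Gamma)$ implicit.
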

\begin{proof}
By the assumption, the differential on $\bigwedge {\frak u}^{\ast}$ is $0$.
By Theorem \ref{isoc}, the derivation on $A^{\ast}$ is $0$ and we have an
isomorphism
\[ A^{\ast}\cong H^{\ast}(G/\Gamma).\]
Thus it is sufficient to show that all elements of $A^{\ast}$ are harmonic.
Let $\ast_{g}$ be the star operator.
Then for $\alpha_{i_{1}\dots i_{p}} x_{i_{1}}\wedge \dots \wedge x_{i_{p}} \in A^{p}$  we have 
\[\ast_{g}(\alpha_{i_{1}\dots i_{p}} x_{i_{1}}\wedge \dots \wedge x_{i_{p}} )=({\rm sgn} \sigma_{IJ}) \bar \alpha_{i_{1}\dots i_{p}} x_{j_{1}}\wedge\dots \wedge x_{j_{n-p}}.\]
Since  the  restriction  of $ \alpha_{i_{1}\dots i_{p}}$ on  $ \Gamma $ is  trivial, the image $\alpha_{i_{1}\dots i_{p}}(G)=\alpha_{i_{1}\dots i_{p}}(G/\Gamma)$ is compact and hence $\alpha_{i_{1}\dots i_{p}}$ is unitary.
Since $G$ has a lattice $\Gamma$, $G$ is unimodular (see \cite[Remark 1.9]{R}) and hence we have 
\[\bar \alpha_{i_{1}\dots i_{p}} =\alpha_{i_{1}\dots i_{p}}^{-1}=\alpha_{j_{1}\dots j_{n-p}}.
\]
Hence we have 
\[\bar \alpha_{i_{1}\dots i_{p}} x_{j_{1}}\wedge\dots \wedge x_{j_{n-p}}=\alpha_{j_{1}\dots j_{n-p}} x_{j_{1}}\wedge\dots \wedge x_{j_{n-p}}\in  A^{n-p}\]
and thus we have $\ast_{g}(A^{\ast})\subset A^{\ast}$.
Since the derivation on $A^{\ast}$ is $0$, we have $\delta(A^{\ast})=0$.
Hence the theorem follows.

\end{proof}
By this theorem and Theorem \ref{abab}, we have Theorem \ref{MT}.
\begin{remark}
Not every invariant metric on $G/\Gamma$ in Theorem \ref{MT} is formal.
See the following example.
\end{remark}
\begin{example}
Let $H=\R\ltimes_{\phi}\R^{2}$ such that $\phi(z)(x,y)=(e^{z}x, e^{-z}y)$.
Consider $G=H\times \R$.
Then for some non-zero number $a\in \R$, $\phi(a)$ is conjugate to an element of $SL_{2}(\Z)$, and hence
 $G$ has a lattice $\Gamma=a\Z\ltimes_{\phi}\Gamma^{\prime}\times \Z$ for a lattice $\Gamma^{\prime}$ of $\R^{2}$.
Let $\g$ be the Lie algebra of $G$ and $\g^{\ast}$ the dual of $\g$.
The cochain complex $(\bigwedge \g^{\ast}, d)$ is generated by a basis $\{x,y,z,w\}$ such that 
\[dx=-z\wedge x,\  dy=z\wedge y,\  dz=0,\  dw=-z\wedge x.\]
Consider the invariant metric $g=x^2+y^2+z^2+w^2$.
Then $z$ and $w-x$ are harmonic for $g$.
But $z\wedge (w-x)$ is not harmonic.
Thus $g$ is not formal.
\end{example}
\begin{example}\label{CCC}
Let $G=\C\ltimes_{\phi} \C^{2}$ with $\phi(z)(x,y)=(e^{z}x, e^{-z}y)$.
For some $p,q\in \R$, $\phi(p\Z+\sqrt{-1}q\Z)$ is conjugate to a subgroup of $SL_{4}(\Z)$ and hence we have a lattice $\Gamma=(p\Z+\sqrt{-1}q\Z)\ltimes \Gamma^{\prime\prime}$ for  a lattice $\Gamma^{\prime\prime}$ of $\C^{2}$ (see \cite{Na} and \cite{Hd}).
For any lattice $\Gamma$, $G/\Gamma$ is geometrically formal by Theorem \ref{MT}.
\begin{remark}
In \cite{BT} for some lattice of $G$ in  Example \ref{CCC}, it is proved that $G/\Gamma$ is geometrically formal.
But the de Rham cohomology of $G/\Gamma$ varies  according to a choice of a lattice $\Gamma$.
\end{remark}
\end{example}
\begin{example}
Let $K$ be a finite extension field of $\Q$ with the degree $r$ for positive integers.
We assume $K$ admits embeddings $\sigma_{1},\dots \sigma_{s},\sigma_{s+1},\dots, \sigma_{s+2t}$ into $\C$ such that $s+2t=r$, $\sigma_{1},\dots ,\sigma_{s}$ are real embeddings and $\sigma_{s+1},\dots, \sigma_{s+2t}$ are complex ones satisfying $\sigma_{s+i}=\bar \sigma_{s+i+t}$ for $1\le i\le t$. 
We suppose $s>0$.
Denote ${\mathcal O}_{K}$ the ring of algebraic integers of $K$, ${\mathcal O}_{K}^{\ast}$ the group of units in ${\mathcal O}_{K}$ and 
\[{\mathcal O}_{K}^{\ast\, +}=\{a\in {\mathcal O}_{K}^{\ast}: \sigma_{i}(a)>0 \,\, {\rm for \,\,  all}\,\, 1\le i\le s\}.
\]  
Define $\sigma :{\mathcal O}_{K}\to \R^{s}\times \C^{t}$ by
\[\sigma(a)=(\sigma_{1}(a),\dots ,\sigma_{s}(a),\sigma_{s+1}(a),\dots ,\sigma_{s+t}(a))
\]
for $a\in {\mathcal O}_{K}$.
We denote 
\[\sigma(a)\cdot \sigma(b)=(\sigma_{1}(a)\sigma_{1}(b),\dots ,\sigma_{s}(a)\sigma_{s}(b),\sigma_{s+1}(a)\sigma_{s+1}(b),\dots ,\sigma_{s+t}(a)\sigma_{s+t}(b))\]
for $a,b\in  {\mathcal O}_{K}$.
Then the image $\sigma({\mathcal O}_{K})$ is a lattice in $\R^{s}\times \C^{t}$.
Define $l:{\mathcal O}_{K}^{\ast\, +}\to \R^{s+t}$ by 
\[l(a)=(\log \vert \sigma_{1}(a)\vert,\dots ,\log \vert \sigma_{s}(a)\vert , 2\log \vert \sigma_{s+1}(a)\vert,\dots ,2\log \vert \sigma_{s+t}(a)\vert)
\]
for $a\in {\mathcal O}_{K}^{\ast\, +}$.
Then by Dirichlet's units theorem, the image $l({\mathcal O}_{K}^{\ast\, +})$ is a lattice in the vector space $L=\{x\in \R^{s+t}\vert \sum_{i=1}^{s+t} x_{i}=0\}$.
By this we have a geometrical representation of the semi-direct product ${\mathcal O}_{K}^{\ast\, +}\ltimes  {\mathcal O}_{K}$ as $l({\mathcal O}_{K}^{\ast\, +})\ltimes_{\phi} \sigma({\mathcal O}_{K})$ with 
\[\phi(t_{1},\dots, t_{s+t})(\sigma(a))=\sigma(l^{-1}(t_{1},\dots, t_{s+t}))\cdot\sigma (a)
\]
for $(t_{1},\dots ,t_{s+t})\in l({\mathcal O}_{K}^{\ast\, +})$.
Since $l({\mathcal O}_{K}^{\ast\, +})$ and $\sigma({\mathcal O}_{K})$ are lattices of $L$ and $\R^{s}\times \C^{t}$ respectively, we have an extension $\bar \phi:L\to {\rm Aut} (\R^{s}\times \C^{t}) $ of $\phi$ and $l({\mathcal O}_{K}^{\ast\, +})\ltimes_{\phi} \sigma({\mathcal O}_{K})$ can be seen as a lattice of $L\ltimes_{\bar \phi} (\R^{s}\times \C^{t})$.
By Theorem \ref{MT}, the solvmanifold $L\ltimes_{\bar \phi} (\R^{s}\times \C^{t})/l({\mathcal O}_{K}^{\ast\, +})\ltimes_{\phi} \sigma({\mathcal O}_{K})$ is geometrically formal by Theorem \ref{CCC}.
For a subgroup $U\subset {\mathcal O}_{K}^{\ast\, +}$, we have a Lie group $L^{\prime}\ltimes_{\bar \phi} (\R^{s}\times \C^{t})$ which contains $l(U)\ltimes_{\phi} \sigma({\mathcal O}_{K})$ as a lattice.
The solvmanifold $L^{\prime}\ltimes_{\bar \phi} (\R^{s}\times \C^{t})/l(U)\ltimes_{\phi} \sigma({\mathcal O}_{K})$ is also geometrically formal by Theorem \ref{MT}.
\end{example}
\begin{example}\label{EXC}
Let $G=\R\ltimes_{\phi} U_{3}(\R)$ such that 
\[\phi(t)\left(
\begin{array}{ccc}
1&  x&  z  \\
0&     1&y      \\
0& 0&1 
\end{array}
\right)=\left(
\begin{array}{ccc}
1&  e^{t}x&  z  \\
0&     1&e^{-t}y      \\
0& 0&1 
\end{array}
\right).\]
The left-invariant forms $\bigwedge \g^{\ast}$ on $G$ is generated by $\{ e^{-t}dx,e^{t}dy, dz-xdy, dt\}$.
It is known that $G$ has a lattice $\Gamma$ (see \cite{Saw}).
By simple computations, we have $H^{1}(\g^{\ast})=\langle dt \rangle$, $\dim H^{2}(\g^{\ast})=0$ and $\dim H^{3}(\g^{\ast})=1$.
Since $G$ is completely solvable, we have $H^{\ast}(G/\Gamma, \R)\cong H^{\ast}(\g^{\ast})$ (see \cite{Hatt}) and hence ${\mathcal H}(\g)={\mathcal H}(G/\Gamma)$ where ${\mathcal H}^{\ast}(\g)$ is the set of left-invarinat harmonic forms.
By $d(\bigwedge^{3}\g^{\ast})=0$, for any invariant metric $g$ on $G$, we have:
\[{\mathcal H}^{1}(\g)=\langle dt\rangle , \]
\[{\mathcal H}^{2}(\g)=0,\]
\[{\mathcal H}^{3}(\g)=\langle (\ast_{g} dt)\rangle .\]
Thus any invariant metric on $G/\Gamma$ is formal.
Otherwise we have ${\bf U}_{G}=U_{3}(\C)\times \C$ and hence this solvmanifold is different from examples of geometrically formal solvmanifold given in Theorem  \ref{MT}.

\end{example}

\section{The extension of Theorem \ref{MT}}
Let $G$ be a simply connected solvable Lie group and $g$  an invariant metric which we construct in Section 5.
Denote $C_{g}$  the group of the isometrical automorphisms of $(G, g)$.
Consider $C_{g}\ltimes G$ and the projection $p :C_{g}\ltimes G\to C_{g}$.
\begin{corollary}\label{Inf}
Suppose $G=\R^{n}\ltimes_{\phi} \R^{m}$ with  a semi-simple action $\phi$.
Let $\Gamma\subset C_{g}\ltimes G$ be a torsion-free discrete subgroup such that $G/\Gamma$ is compact.
Suppose $p(\Gamma)$ is finite.

Then the metric $g$ given in the last section is a formal metric on $G/\Gamma$.
\end{corollary}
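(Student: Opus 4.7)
The plan is to reduce the corollary to Theorem \ref{MT} via a finite covering argument. Let $\Gamma'=\Gamma\cap(\{1\}\times G)=\ker(p|_\Gamma)$. Since $p(\Gamma)$ is finite, $\Gamma'$ is a normal subgroup of $\Gamma$ of finite index, and because $\Gamma$ acts freely and properly discontinuously on $G$ with compact quotient, so does $\Gamma'\subset G$. Hence $\Gamma'$ is a lattice in $G$, and Theorem \ref{MT} (combined with the construction in Section 5) tells us that $g$ is a formal metric on the solvmanifold $G/\Gamma'$.

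Next I would exploit the finite regular covering $\pi\colon G/\Gamma'\to G/\Gamma$ with deck group $F=\Gamma/\Gamma'$. By hypothesis $\Gamma\subset C_g\ltimes G$ acts on $G$ by isometries of $g$, so $F$ acts on $G/\Gamma'$ by isometries, i.e.\ $\pi$ is a local isometry. Pullback by $\pi$ identifies $A^{\ast}(G/\Gamma)$ with the subalgebra $A^{\ast}(G/\Gamma')^{F}$ of $F$-invariant forms, and this identification intertwines the wedge product, the Hodge star, the codifferential, and $d$. Consequently the harmonic forms on $G/\Gamma$ correspond precisely to the $F$-invariant harmonic forms on $G/\Gamma'$.

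Now given harmonic forms $\omega_{1},\dots,\omega_{k}$ on $G/\Gamma$, the pullbacks $\pi^{\ast}\omega_{1},\dots,\pi^{\ast}\omega_{k}$ are $F$-invariant and harmonic on $G/\Gamma'$. Since $g$ is formal on $G/\Gamma'$, the wedge product $\pi^{\ast}\omega_{1}\wedge\cdots\wedge\pi^{\ast}\omega_{k}$ is harmonic; being a product of $F$-invariant forms it is also $F$-invariant, hence descends to the harmonic form $\omega_{1}\wedge\cdots\wedge\omega_{k}$ on $G/\Gamma$. This establishes formality of $g$ on $G/\Gamma$.

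The proof is essentially an observation once the setup is unraveled; there is no serious obstacle. The one point that deserves care is checking that $\Gamma'$ really is a lattice of $G$ (not just a discrete subgroup), so that Theorem \ref{MT} applies — this follows immediately from the finite-index inclusion $\Gamma'\subset\Gamma$ and the compactness of $G/\Gamma$. Everything else is formal: the key structural input is that $\Gamma$ acts by isometries of $g$, which is exactly the content of the hypothesis $\Gamma\subset C_g\ltimes G$.
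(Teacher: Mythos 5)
Your argument is correct and is essentially identical to the paper's proof: the paper sets $\Delta=\Gamma\cap G$ (your $\Gamma'$), notes it is a finite-index lattice with $A^{\ast}(G/\Gamma)=A^{\ast}(G/\Delta)^{\Gamma/\Delta}$ and hence ${\mathcal H}(G/\Gamma)={\mathcal H}(G/\Delta)^{\Gamma/\Delta}$, and concludes by closure of ${\mathcal H}(G/\Delta)$ under wedge from Theorem \ref{MT}. No substantive differences.
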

\begin{proof}
Let $\Delta=\Gamma\cap G$.
Since $\Gamma/\Delta\cong p(\Gamma)$, $\Delta$ is a finite index normal subgroup of $\Gamma$ and  $G/\Delta$ is compact and hence $\Delta\subset G$ is a lattice.
Denote ${\mathcal H}(G/\Gamma)$ and ${\mathcal H}(G/\Delta)$ the sets of the harmonic forms on $G/\Gamma$ and $G/\Delta$ for the metric $g$.
Since we have $A^{\ast}(G/\Gamma)=A^{\ast}(G/\Delta)^{\Gamma/\Delta}$, we have
\[{\mathcal H}(G/\Gamma)= {\mathcal H}(G/\Delta)^{\Gamma/\Delta}.\]
By Theorem \ref{MT}, ${\mathcal H}(G/\Delta)$ is closed under the wedge product, so is ${\mathcal H}(G/\Delta)^{\Gamma/\Delta}$.
Hence the corollary follows.
\end{proof}
\begin{remark}\label{BIB}
Not all cocompact discrete subgroup $\Gamma$ satisfies the assumption of the finiteness of $p(\Gamma)$.
See the following example.
\end{remark}

\begin{example}\label{INO}
Let $G=\R\ltimes_{\phi}\R^{3}$ such that $\phi(t)=\left(
\begin{array}{ccc}
e^{t}&  0&  0 \\
0&     e^{t}&0     \\
0& 0&e^{-2t}
\end{array}
\right)$.
Then $G$ has no lattice (see \cite[Chapter 7]{Hil}).
Consider the metric $g=e^{-2t}dx^2+e^{-2t}dy^2+e^{4t}dz^2+dt^2$.
Then we have $C_{g}=O(2)\times O(1)$ acting as rotations and reflections on the $(x,y)$-coordinates and reflection on the $z$-coordinate.
$C_{g}\ltimes G$ admits a torsion-free cocompact discrete subgroup $\Gamma$.
Since $G\cap \Gamma$ is not a lattice of $G$, $p(\Gamma)$ is not finite.
In \cite[Chapter 8]{Hil} it is proved that $\Gamma\cong \Z\ltimes_{\phi}\Z^{3}$ and for $t\not=0$ $\phi(t)\in SL_{3}(\Z)$ has a pair of complex conjugate eigenvalues (see \cite[Chapter 7]{Hil}).
Hence $\Gamma$ can be a lattice of  a Lie group $H=\R\ltimes _{\phi}\R^{3} $ with $\phi(t)=\left(\begin{array}{ccc} e^{t}\cos c t & -e^{t}\sin c t &0\\ e^{t}\sin c t &e^{t}\cos c t&0\\
0&0& e^{-2t}
\end{array}\right)
 $, and $G/\Gamma=H/\Gamma$ is geometrically formal by Theorem \ref{MT}.
\end{example}
\section{Thurston's Geometries and infrasolvmanifold}
We say that a compact oriented manifold $M$ admits a geometry $(X, g)$  if $M=X/\Gamma$ where $X$ is a simply connected manifold with a complete Riemaniann metric $g$ and $\Gamma$ is a cocompact discrete subgroup of the group $Isom_{g}(X)$ of isometries.
If $(X,g)$ is a solvable Lie group with an invariant metric $g$, we call it a solvable Lie type geometry.
We consider the following $3$-dimensional solvable Lie type geometries.\\
(3-A) $X=E^{3}= \R^{3}$, $g_{E^{3}}=dx^{2}+dy^{2}+dz^{2}$.\\
(3-B) $X=Nil^{3}=U_{3}(\R)=\left\{\left(
\begin{array}{ccc}
1&  x&  z  \\
0&     1&y      \\
0& 0&1 
\end{array}
\right): x,y,z\in \R\right\}$, $g_{Nil^3}=dx^{2}+dy^{2}+(dz-xdy)^{2}$.\\
(3-C) $X=Sol^{3}=\R\ltimes_{\phi}\R^{2}$ with $\phi(z)=\left(
\begin{array}{cc}
e^t&  0 \\
0&   e^{-t}  
\end{array}
\right)$, $g_{Sol^{3}}=e^{2z}dx^{2}+e^{-2z}dy^{2}+dz^{2}$. 

By the theory of geometry and topology of $3$-dimensional manifolds we have the following theorem (see \cite{PS}).
\begin{theorem}\label{333}
A compact aspherical $3$-dimensional manifold  with the  virtually solvable fundamental group admits  one of the geometries (3-A$\sim$C). 

\end{theorem}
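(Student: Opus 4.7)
The plan is to invoke Thurston's eight-geometry classification for compact $3$-manifolds (the geometrization theorem, completed by Perelman). First I would observe that compactness together with asphericity forces the model geometry to have contractible universal cover, which immediately rules out the spherical geometry $S^{3}$ and the product geometry $S^{2}\times\R$. Hence $M$ carries one of the six remaining Thurston geometries: $E^{3}$, $\mathbb{H}^{3}$, $\mathbb{H}^{2}\times\R$, $\widetilde{SL_{2}(\R)}$, $Nil^{3}$, or $Sol^{3}$.

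The second step is to use the hypothesis that $\pi_{1}(M)$ is virtually solvable in order to eliminate the three non-solvable models $\mathbb{H}^{3}$, $\mathbb{H}^{2}\times\R$, and $\widetilde{SL_{2}(\R)}$. In each case a cocompact lattice in the isometry group contains a non-abelian free subgroup: for $\mathbb{H}^{3}$ this follows from a ping-pong argument inside any cocompact Kleinian group, while for $\mathbb{H}^{2}\times\R$ and $\widetilde{SL_{2}(\R)}$ one projects onto a cocompact Fuchsian group of genus $\ge 2$ and extracts a rank-two free subgroup there. Since no virtually solvable group can contain a non-abelian free subgroup, these three geometries are incompatible with the hypothesis, and only the solvable Lie type geometries (3-A), (3-B), (3-C) survive. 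One checks conversely that each of these three really is realized under the hypothesis (Bieberbach for $E^{3}$, virtually nilpotent lattices for $Nil^{3}$, and mapping tori of Anosov diffeomorphisms of $T^{2}$ for $Sol^{3}$), so no case is spurious.

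The principal obstacle is the invocation of geometrization itself, which is the deep input. A more self-contained alternative avoids Perelman by combining the classical theorem of Evans--Moser with Scott's work on Seifert fibered spaces: a closed aspherical $3$-manifold with virtually polycyclic fundamental group is either Seifert fibered over a Euclidean $2$-orbifold --- producing $E^{3}$ or $Nil^{3}$ according as the rational Euler class of the fibration vanishes or not --- or fibers over $S^{1}$ with Anosov monodromy on a torus fiber, which produces $Sol^{3}$. This furnishes the required trichotomy directly, without needing the full strength of geometrization, and is essentially the classical content behind the citation \cite{PS}.
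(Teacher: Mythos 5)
Your proposal is correct in substance, and it is worth noting that the paper offers no proof of this statement at all: it is quoted as a known consequence of the theory of $3$-manifold geometries with a bare citation of Scott \cite{PS}. Your second, ``self-contained'' route (Evans--Moser plus the Seifert fibration analysis in \cite{PS}: virtually solvable implies virtually polycyclic for a closed aspherical $3$-manifold group, and the manifold is then either Seifert fibered over a Euclidean $2$-orbifold, giving $E^{3}$ or $Nil^{3}$ according to the Euler class, or a mapping torus with Anosov monodromy, giving $Sol^{3}$) is essentially the content hiding behind that citation, so you have in effect supplied the missing argument rather than diverged from it. One caution on your first route: geometrization does not directly say that $M$ carries a single geometry, only that it decomposes along spheres and tori into geometric pieces, so after discarding $S^{3}$ and $S^{2}\times\R$ and the three geometries whose cocompact lattices contain free subgroups of rank two you must still argue that the virtual solvability of $\pi_{1}(M)$ forces the JSJ decomposition to be trivial (or to be of the special $Sol^{3}$ type); as written that step is elided. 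Since your classical route closes exactly this gap without Perelman, the proposal as a whole stands, and it is arguably more informative than the paper's treatment, at the cost of invoking machinery the paper is content to leave implicit.
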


We also consider the following $4$-dimensional solvable Lie type geometries (listed in \cite{W}).\\
(4-A) $X=E^{4}=\R^{4}$, $g_{E^{4}}=dx^{2}+dy^{2}+dz^{2}+dt^{2}$.\\
(4-B)  $X=Nil^3\times E=U_{3}(\R)\times \R$, $g_{Nil^3\times E}=dx^{2}+dy^{2}+(dz-xdy)^{2}+dt^{2}$.\\
(4-C)   $X=Nil^{4}=\left\{\left(
\begin{array}{cccc}
1&  t&  \frac{1}{2}t^{2}&z  \\
0&     1&t & y    \\
0& 0&1 &x \\
0&0&0&1
\end{array}
\right): x,y,z,t\in \R\right\}$,\\ $g_{Nil^4}=dx^2+(dy-tdz)^2+(dz-tdy+\frac{1}{2}t^2dx)^2+dt^2$\\
(4-D) $X=Sol^{3}\times E$, $g_{Sol^{3}\times E}=e^{2z}dx^{2}+e^{-2z}dy^{2}+dz^{2}+dt^{2}$.\\
(4-E) $X=Sol^{4}_{m,n}=\R\ltimes_{\phi}\R^{3}$ such that $\phi(t)=\left(
\begin{array}{ccc}
e^{at}&  0&  0 \\
0&     e^{bt}&0     \\
0& 0&e^{ct}
\end{array}
\right)$, where $e^a, e^{b}, e^{c}$ are distinct roots of $X^{3}-mX^2+nX-1$ for real numbers $a<b<c$ and integers $m<n$,
$g_{Sol^{4}_{m,n}}=e^{-2at}dx^2+e^{-2bt}dy^2+e^{-2ct}dz^2+dt^2$.\\
(4-F) $X=Sol^{4}_{0}=\R\ltimes_{\phi}\R^{3}$ such that $\phi(t)=\left(
\begin{array}{ccc}
e^{t}&  0&  0 \\
0&     e^{t}&0     \\
0& 0&e^{-2t}
\end{array}
\right)$, \\
$g_{Sol^{4}_{0}}=e^{-2t}dx^2+e^{-2t}dy^2+e^{4t}dz^2+dt^2$.\\
(4-G) $X=Sol_{1}^{4}=\R\ltimes_{\phi} U_{3}(\R)$ such that 
$\phi(t)\left(
\begin{array}{ccc}
1&  x&  z  \\
0&     1&y      \\
0& 0&1 
\end{array}
\right)=\left(
\begin{array}{ccc}
1&  e^{t}x&  z  \\
0&     1&e^{-t}y      \\
0& 0&1 
\end{array}
\right)$,\\
$g_{Sol^{4}_{1}}=e^{-2t}dx^2+e^{2t}dy^2+ (dz-xdy)^2+dt^2$.

Let $G$ be a simply connected solvable Lie group and $g$ an invariant metric on $G$.
We consider the affine transformation group ${\rm Aut}(G)\ltimes G$ and the projection $p:{\rm Aut}(G)\ltimes G\to{\rm Aut}(G)$.
Let $\Gamma\subset {\rm Aut}(G)\ltimes G$ be a torsion-free discrete subgroup such that $p(\Gamma)$ is contained in a compact subgroup of ${\rm Aut}(G)$ and the quotient $G/\Gamma$ is compact.
We call $G/\Gamma$ an infra-solvmanifold.
If $G$ is nilpotent, $G/\Gamma$ is called an infra-nilmanifold.
Since $\Gamma \subset Isom_{g}(G)$ does not satisfies $\Gamma\subset {\rm Aut}(G)\ltimes G$, a compact manifold with a solvable Lie type geometry is not an infra-solvmanifold in general.
Suppose $Isom_{g}(G)\subset  {\rm Aut}(G)\ltimes G$.
Then for an isometry transformation $(\phi, x)\in  {\rm Aut}(G)\ltimes G$, $\phi$ is an also isometry transformation.
By this, for the group $C_{g}$ of the isometrical automorphisms of $G$, we have $Isom_{g}(G)=C_{g}\ltimes G$.
Thus in the assumption $Isom_{g}(G)\subset  {\rm Aut}(G)\ltimes G$, a compact manifold with a solvable Lie type geometry is  an infra-solvmanifold.
It is known that for the Euclidian geometry $(E^{n}, g_{E^{n}}=dx^2_{1}+\dots+ dx_{n}^{2})$ we have $Isom_{g_{E^{n}}}=O(n)\ltimes \R^{n}$ and the geometries (3-A$\sim$C) satisfies $Isom_{g}(G)\subset  {\rm Aut}(G)\ltimes G$(see \cite{PS}).
In \cite{Hil}, Hillman studied the structures of $Isom_{g}(G)$ of the geometries (4-A$\sim$H) and proved $Isom_{g}(G)\subset  {\rm Aut}(G)\ltimes G$.
In \cite{Hil2}, Hillman proved the following theorem.
\begin{theorem}{\rm (\cite[Theorem 8]{Hil2})} \label{HILL}
A $4$-dimensional infra-solvmanifold is diffeomorphic to a manifold which admits one of the geometries (4-A$\sim$G).
\end{theorem}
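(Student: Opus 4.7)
The plan is to reduce the analysis to genuine solvmanifolds via a finite cover. Starting from $M = G/\Gamma$ with $\dim G = 4$ and $\Gamma \subset {\rm Aut}(G) \ltimes G$ as in the definition, discreteness of $\Gamma$ combined with $p(\Gamma)$ lying in a compact subgroup of ${\rm Aut}(G)$ forces $p(\Gamma)$ to be finite. Hence $\Delta := \Gamma \cap G$ is a finite-index normal subgroup of $\Gamma$ that is a lattice in $G$, and $G/\Delta$ is a solvmanifold finitely covering $M$, with $M = (G/\Delta)/(\Gamma/\Delta)$ where $\Gamma/\Delta$ should act by isometries of a well-chosen invariant metric.

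I would then carry out two parallel classifications. First, using a Mubarakzyanov-type list of $4$-dimensional simply connected solvable Lie groups, single out those admitting lattices: $\R^4$, $Nil^3 \times \R$, $Nil^4$, $Sol^3 \times \R$, the family $Sol^4_{m,n}$, and $Sol^4_1$. Aside from the special case $Sol^4_0$ discussed below, each such Lie group is the underlying group of one of the geometries (4-A)--(4-G), so $G/\Delta$ directly admits the corresponding geometry. To upgrade this to $M$, Hillman's result (cited in the paper) gives $Isom_{g}(G) = C_g \ltimes G$ for the canonical invariant metric $g$; after conjugating within ${\rm Aut}(G) \ltimes G$ to place $p(\Gamma)$ inside $C_g$, one obtains $\Gamma \subset Isom_{g}(G)$ and hence $M$ admits the geometry.

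The subtle point, which is the heart of the theorem, is that $G$ itself need not admit a lattice: the infra-solvmanifold may sit over no honest solvmanifold of $G$. Example \ref{INO} is the canonical instance --- $G = Sol^4_0$ admits no lattice, yet an infra-lattice whose holonomy acts by a complex rotation reinterprets $M$ as a pure lattice quotient of a different solvable Lie group $H = \R \ltimes_{\phi} \R^3$ with a twisted, complex-eigenvalue semisimple action. For each infra-solvmanifold whose underlying $G$ lacks a lattice, I would construct a diffeomorphism of universal covers onto that of a Lie group $H$ in the geometric list, intertwining the two $\Gamma$-actions by absorbing the finite holonomy into the semisimple part of the twist. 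Identifying the correct twisted action in each case, and verifying the diffeomorphism, will be the main obstacle; it is where Hillman's detailed analysis of $Isom_g(G)$ for all geometries (4-A)--(4-H) in \cite{Hil} enters decisively, and it also rules out (4-H), which does not sit inside any ${\rm Aut}(G) \ltimes G$ with compact holonomy.
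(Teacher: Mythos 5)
The paper offers no proof of this statement: it is quoted directly from Hillman (\cite[Theorem 8]{Hil2}), so there is no internal argument to compare against and your proposal must stand on its own as a reconstruction of Hillman's proof. It does not, because its first step is false. Discreteness of $\Gamma$ in ${\rm Aut}(G)\ltimes G$ together with $p(\Gamma)$ lying in a compact subgroup does \emph{not} force $p(\Gamma)$ to be finite --- the projection of a discrete subgroup need not be discrete. The paper itself makes exactly this point in Remark \ref{BIB} and Example \ref{INO}: for $G=Sol^{4}_{0}$ there is a torsion-free cocompact discrete $\Gamma\subset C_{g}\ltimes G$ with $p(\Gamma)$ infinite and $\Gamma\cap G$ not a lattice (indeed $G$ admits no lattice at all). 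So the dichotomy set up in your first paragraph --- that $\Delta=\Gamma\cap G$ is always a finite-index lattice and $M$ is always finitely covered by a solvmanifold of $G$ itself --- fails, and your third paragraph, which correctly singles out $Sol^{4}_{0}$ as the problematic case, contradicts your first.

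Beyond that internal inconsistency, the substantive content of Hillman's theorem is precisely the part you label ``the main obstacle'' and leave unresolved: showing that every $4$-dimensional infra-solvmanifold, including those whose underlying group has no lattice or whose holonomy is not already conjugate into the isometry group of a standard metric, can be re-presented as a geometric quotient for one of (4-A)--(4-G). That step requires a classification of the possible groups $\Gamma$ (rigidity for virtually polycyclic groups of Hirsch length $4$) and a case-by-case geometric realization, not merely a list of which $4$-dimensional solvable Lie groups admit lattices; your appeal to ``absorbing the finite holonomy into the semisimple part of the twist'' in each case is the theorem, not a proof of it. The remark about a geometry (4-H) is also unsupported, since no such geometry is defined in this paper. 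As written, the proposal is a reasonable outline of the strategy but omits, and in one place contradicts, the argument that would establish the result.
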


\begin{remark}\label{InfB}
By Baues's result in \cite{B}, any compact  aspherical manifold with the virtually solvable fundamental group is homotopy equivalent to an infra-solvmanifold $G/\Gamma$.
But for dimension$\ge 4$,  there may exist a compact aspherical manifold with virtually solvable fundamental group which is not diffeomorphic to an infra-solvmanifold.
\end{remark}

\section{Geometrical formality of $3$-manifolds}

\begin{theorem}\label{33}
Let $M$ be a compact oriented  aspherical  $3$-manifold with  the virtually solvable fundamental group.
If $M$ is a torus or not a nilmanifold, then $M$ is geometrically formal.
\end{theorem}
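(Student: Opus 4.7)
The plan is to apply Theorem \ref{333} to reduce $M$ to one of the three solvable Lie type geometries and then to handle each case either by the previously established machinery (Theorem \ref{MT} together with Corollary \ref{Inf}) or, in the $Nil^{3}$ case, by a direct cohomological argument that uses the hypothesis ``$M$ is not a nilmanifold''.

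By Theorem \ref{333}, $M$ admits one of $(E^{3},g_{E^{3}})$, $(Nil^{3},g_{Nil^{3}})$, $(Sol^{3},g_{Sol^{3}})$; as recalled from \cite{PS}, each of these geometries satisfies $Isom_{g}(G)\subset {\rm Aut}(G)\ltimes G$, so $M=G/\Gamma$ for a torsion-free cocompact discrete $\Gamma\subset C_{g}\ltimes G$. In the $E^{3}$ and $Sol^{3}$ cases the underlying $G$ is $\R^{n}\ltimes_{\phi}\R^{m}$ with semisimple $\phi$, and the holonomy $p(\Gamma)$ is finite (by Bieberbach for $E^{3}$, and immediately from finiteness of $C_{g_{Sol^{3}}}$ for $Sol^{3}$), so Corollary \ref{Inf} applies and $g$ is a formal metric on $M$. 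The torus subcase of $E^{3}$ is just Theorem \ref{MT} with trivial holonomy.

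The remaining case is $(Nil^{3},g_{Nil^{3}})$, which lies outside the scope of Theorem \ref{MT} because $Nil^{3}$ is not of the required semidirect-product form. Here the hypothesis that $M$ is not a nilmanifold forces the holonomy $F=p(\Gamma)\subset C_{g_{Nil^{3}}}$ to be nontrivial. I would work with the left-invariant coframe $\alpha=dx$, $\beta=dy$, $\gamma=dz-x\,dy$ satisfying $d\alpha=d\beta=0$ and $d\gamma=-\alpha\wedge\beta$; a short direct computation identifies the space of left-invariant harmonic forms for $g_{Nil^{3}}$ as $\R$, $\R\alpha\oplus\R\beta$, $\R(\alpha\gamma)\oplus\R(\beta\gamma)$, and $\R(\alpha\beta\gamma)$ in degrees $0$, $1$, $2$, $3$. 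Since $Nil^{3}$ is nilpotent these compute $H^{\ast}(Nil^{3}/(Nil^{3}\cap\Gamma),\R)$ (by the same completely solvable result \cite{Hatt} used in Example \ref{EXC}), and their $F$-invariants compute $\mathcal{H}^{\ast}(M)\cong H^{\ast}(M,\R)$.

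The decisive step is to verify that every nontrivial element of $C_{g_{Nil^{3}}}$ acts nontrivially on the abelianization $\langle\alpha,\beta\rangle$: using the $Nil^{3}$ group law one checks that an isometric automorphism fixing $\alpha$ and $\beta$ must be the identity. Consequently $\dim(\mathcal{H}^{1})^{F}\le 1$, and by Poincar\'e duality $\dim(\mathcal{H}^{2})^{F}\le 1$. The $F$-invariant harmonic algebra is then spanned by $1$, at most one $\omega\in\mathcal{H}^{1}$, at most one $\eta\in\mathcal{H}^{2}$, and the volume form; $\omega\wedge\omega=0$ by antisymmetry, the product $\omega\wedge\eta$ automatically lies in the one-dimensional $\mathcal{H}^{3}$ (it equals an explicit scalar multiple of $\alpha\beta\gamma$), and all remaining products vanish for degree reasons. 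Thus $\mathcal{H}^{\ast}(M)$ is a subalgebra of $A^{\ast}(M)$ and $g_{Nil^{3}}$ is a formal metric on $M$. The main obstacle is precisely this $Nil^{3}$ step, since it is not covered by Theorem \ref{MT} and depends essentially on the faithfulness of the $C_{g_{Nil^{3}}}$-action on the abelianization, which is what kills the single obstruction $\alpha\wedge\beta$ to formality of the underlying nilmanifold.
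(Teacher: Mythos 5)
Your proposal is correct and follows essentially the same route as the paper: reduce via Theorem \ref{333} to the three geometries, dispose of $E^{3}$ and $Sol^{3}$ by Corollary \ref{Inf}, and in the $Nil^{3}$ case use nontriviality of the holonomy acting on $H^{1}(\g)$ to cut the invariant harmonic forms down to at most one generator in each of degrees $1$ and $2$, with the only nontrivial product landing in the (automatically harmonic) top degree. The sole difference is cosmetic: where you verify directly that a nontrivial isometric automorphism of $Nil^{3}$ acts nontrivially on the abelianization, the paper cites \cite[Lemma 13.1]{BG}.
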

\begin{proof}
By Theorem \ref{333}, it is sufficient to consider the geometries (3-A$\sim$C). 
In the case (3-A),  by  Corollary \ref{Inf} and the first  Bieberbach theorem $g_{E^{3}}$ is a formal metric on $G/\Gamma$.

In the case (3-C), it is known that $C_{g}$ is isomorphic to the finite dihedral group  $D(8)$ (see \cite{PS}) and  hence by  Corollary \ref{Inf} $g_{Sol^{3}}$ is a formal metric on $G/\Gamma$.

Suppose $(G,g)$ is in the case (3-B).
Then  $C_{g}$ has two components and the identity component of $C_{g}$ is isomorphic to a circle $S^{1}$.
Let $\Delta=\Gamma\cap G$.
By Generalized Bieberbach's theorem (see \cite{Au}), $\Delta$ is a finite  index normal subgroup of  $\Gamma$.
Consider the projection $p: C_{g}\ltimes G \to C_{g}$.
If $p(\Gamma)$ is trivial, then $\Gamma\subset G$ is a lattice and $G/\Gamma$ is a non-toral nilmanifold and hence not formal (see \cite{H}).
Suppose $p(\Gamma) $ is non-trivial.
By Nomizu's theorem (\cite{Nom}) we have 
\[H^{\ast}(G/\Delta, \R)\cong H^{\ast}(\g)\]
where $\g$ is the Lie algebra of $G$.
By this we have 
\[H^{\ast}(G/\Gamma, \R)\cong H^{\ast}(G/\Delta, \R)^{\Gamma/\Delta}\cong H^{\ast}(\g)^{\Gamma/\Delta}.
\]
In \cite[Lemma 13.1]{BG}, it is shown that a non-trivial semisimple automorphism of a nilpotent Lie algebra $\g$ acts non-trivially on $H^{1}(\g)$.
Since $\Gamma/\Delta \cong p(\Gamma)$ is a nontrivial finite group,
\[H^{1}(\g)^{\Gamma/\Delta}\not= H^{1}(\g).
\]
Since $\dim H^{1}(\g)=2$, $\dim  H^{1}(\g)^{\Gamma/\Delta}=0$ or $1$.
If $  \dim  H^{1}(\g)^{\Gamma/\Delta}=0$, then $G/\Gamma$ is a  rational homology sphere and any metric on $G/\Gamma$ is formal.
Suppose  $\dim  H^{1}(\g)^{\Gamma/\Delta}=1$.
Then $b_{i}=1$ for any $1\le i\le 3$.
For any $1\le i\le 3$, we have
\[H^{\ast}(G/\Gamma,\R)\cong   H^{\ast}(\g)^{\Gamma/\Delta}=\bigoplus_{i=1}^{3}\langle [\alpha_{i}]\rangle\]
for non-zero cohomology classes $[\alpha_{i}]\in  H^{i}(\g)$.
We can choose invariant harmonic forms $\alpha_{i}$, $i=1,2,3$  for the invariant metric $g$.
Then we have ${\mathcal H}(G/\Gamma)=\bigoplus_{i=1}^{3}\langle \alpha_{i}\rangle$,
Since all elements of $\bigwedge ^{3}\g^{\ast}$ are harmonic, $\alpha_{1}\wedge\alpha_{2}$ is harmonic.
For  $i<j$ with $(i,j)\not=(1,2)$, we have $\alpha_{i}\wedge\alpha_{j}=0$.
Thus $g$ is a formal metric on $G/\Gamma$.
This completes the proof of the theorem.
\end{proof}
\begin{remark}
There exists a closed $3$-dimensional infra-nilmanifold which is not a nilmanifold.
By this theorem such a manifold is geometrically formal.
\begin{example}
Consider $\Gamma=\Z\ltimes_{\phi}\Z^{2}$ such that $\phi(t)=\left(
\begin{array}{cc}
(-1)^t&  (-1)^t t \\
0&   (-1) ^t 
\end{array}
\right)$.
Then we can embed $\Gamma$ in $Isom_{g_{Nil^{3}}}( Nil^{3})$ (see \cite{K}).
By the direct computation of the lower central series, $\Gamma$ is non-nilpotent and hence $Nil^{3}/\Gamma$ is not a nilmanifold.

\end{example}
\end{remark}

\section{Aspherical manifolds with the virtually solvable fundamental groups}
\begin{theorem}\label{44}
Let $M$ be an oriented $4$-dimensional infra-solvmanifold.
If $M$ is a torus or not a nilmanifold, then $M$ is geometrically formal.
\end{theorem}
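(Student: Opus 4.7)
The plan is to apply Hillman's Theorem \ref{HILL} and treat the seven model geometries (4-A)--(4-G) one at a time. For each such geometry $(X,g)$, Hillman showed $Isom_g(X)\subset {\rm Aut}(X)\ltimes X$, hence $Isom_g(X)=C_g\ltimes X$. Since $\Gamma$ is a torsion-free discrete subgroup of ${\rm Aut}(X)\ltimes X$ with $p(\Gamma)$ contained in a compact subgroup, after replacing $g$ by the $p(\Gamma)$-average of an invariant metric we may assume $\Gamma\subset C_g\ltimes X$ and $p(\Gamma)$ is finite.

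In cases (4-A), (4-D), (4-E), (4-F) the model $X$ takes the form $\R^n\ltimes_\phi \R^m$ with semisimple action $\phi$ (taking $n=0$ for (4-A)), so Corollary \ref{Inf} applies directly and the canonical invariant metric of Section 5 is formal on $M=X/\Gamma$; this also covers the torus case. In case (4-G), where $X=Sol^4_1$, Example \ref{EXC} shows that for any invariant metric the left-invariant harmonic forms have $\dim {\mathcal H}^1=\dim {\mathcal H}^3=1$ and ${\mathcal H}^2=0$, so every nontrivial product lands in $\bigwedge^4\g^\ast$, which is one-dimensional and hence automatically harmonic. Passing to $\Gamma/\Delta$-invariants with $\Delta=\Gamma\cap X$ (a lattice by the generalized Bieberbach theorem) preserves this property, so formality descends to $X/\Gamma$ exactly as in the proof of Corollary \ref{Inf}.

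The main obstacle is the two nilpotent geometries (4-B) $Nil^3\times E$ and (4-C) $Nil^4$, where Theorem \ref{MT} is not available. Because $M$ is assumed not to be a nilmanifold, $\Delta=\Gamma\cap X$ is a proper finite-index normal subgroup of $\Gamma$ and $\Gamma/\Delta\cong p(\Gamma)$ is a nontrivial finite group of automorphisms of $X$ acting semisimply on $\g$ (it lies in the compact group $C_g$). Nomizu's theorem gives $H^\ast(X/\Gamma,\R)\cong H^\ast(\g)^{\Gamma/\Delta}$, and \cite[Lemma 13.1]{BG} (already used in the proof of Theorem \ref{33}) asserts that the $\Gamma/\Delta$-action on $H^1(\g)$ is nontrivial, so $\dim H^1(\g)^{\Gamma/\Delta}<\dim H^1(\g)$. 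The plan, mirroring the three-dimensional argument, is to combine this dimension drop with Poincar\'e duality on the orientable $M$ and a case-by-case enumeration of the possible $\Gamma/\Delta$-representations on $H^\ast(\g)$: in each case one exhibits invariant harmonic forms for the model invariant metric $g_{Nil^3\times E}$ or $g_{Nil^4}$ and checks directly that their wedge products either vanish or land in the one-dimensional top-degree piece, where harmonicity is automatic.

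The hardest subcase will be (4-C) $Nil^4$, because its Lie algebra has a richer cohomology than $\mathfrak{nil}^3$ and several conjugacy classes of finite symmetry groups must be considered separately. The organizing principle, as in Theorem \ref{33}, is that nontriviality of $\Gamma/\Delta$ forces the invariant cohomology $H^\ast(\g)^{\Gamma/\Delta}$ to be small enough that every nonzero cup product collapses into the top degree or into zero, after which formality of the invariant metric becomes automatic.
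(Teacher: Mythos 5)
Your overall skeleton (reduce to the seven geometries via Theorem \ref{HILL} and run casework) matches the paper, and your treatments of (4-A), (4-D), (4-E), (4-G) and the sketch for (4-B) are essentially the paper's arguments. But there are two genuine gaps. The first is case (4-F): Corollary \ref{Inf} does \emph{not} apply there, and your averaging step does not rescue it. The hypothesis of Corollary \ref{Inf} is that $p(\Gamma)$ is \emph{finite}, not merely contained in a compact subgroup, and a subgroup of a compact group need not be finite. In fact $Sol^{4}_{0}$ admits no lattice at all (Remark \ref{BIB} and Example \ref{INO} of the paper are written precisely to flag this), so for every cocompact discrete $\Gamma\subset Isom_{g}(Sol^{4}_{0})$ the intersection $\Gamma\cap Sol^{4}_{0}$ fails to be a lattice and $p(\Gamma)$ is infinite. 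The correct argument, carried out in Example \ref{INO}, is to observe that $\Gamma\cong \Z\ltimes_{\phi}\Z^{3}$ with $\phi$ an integer matrix having a pair of complex conjugate eigenvalues, so that $\Gamma$ sits as a lattice in a \emph{different} group $H=\R\ltimes\R^{3}$ whose action is semisimple (a rotation--dilation block plus a real eigenvalue); then $Sol^{4}_{0}/\Gamma=H/\Gamma$ and Theorem \ref{MT} applies to $H$. Without this re-embedding your case (4-F) is simply unproved.

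The second gap is case (4-C). Your plan to enumerate the finite symmetry groups of $Nil^{4}$ and check invariant cohomology case by case is never executed, and it is also the wrong strategy: the group of orientation-preserving isometries of $Nil^{4}$ is $Nil^{4}$ itself (Ue, Wall), so every \emph{oriented} $Nil^{4}$-manifold is a genuine nilmanifold. Such manifolds are excluded by the hypothesis ``$M$ is a torus or not a nilmanifold'' (and they are non-formal by Hasegawa, which is what the converse direction needs), so the case is vacuous and your ``hardest subcase'' dissolves. As written, your proposal leaves both (4-F) and (4-C) open, and the averaging reduction in your first paragraph is false as stated.
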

\begin{proof}
In the case (4-A), by  Corollary \ref{Inf} and the first  Bieberbach theorem $g_{E^{4}}$ is a formal metric on $G/\Gamma$.

In the case (4-D) (resp (4-E)), $C_{g}$ is isomorphic to the finite group $D(8)\times (\Z/2\Z)$ (resp. $(\Z/2\Z)^{3}$) (see \cite[Chapter 7]{Hil}) and hence $g_{Sol^{3}\times E}$ (resp. $g_{Sol^{4}_{m,n}}$) is a formal metric on $G/\Gamma$ by Corollary \ref{Inf}.

As we  showed in Example \ref{INO}, in the case (4-F) $G/\Gamma$ is geometrically formal.

In the case (4-B), the group of all the orientation preserving isomorphisms is $Isom_{g_{Nil^{3}} }Nil^{3}\times \R$ (see \cite{Ue}, \cite{W}, or \cite{W2}).
Thus as the proof of Theorem  \ref{33} for the case (3-B), if $G/\Gamma$ is a nilmanifold then $G/\Gamma$ is not formal, and if $G/\Gamma$ is an infra-nilmanifold but not a nilmanifold then $g_{Nil^{3}\times \R}$ is formal.

In the case  (4-C), the group of all the orientation preserving isomorphisms is $Nil^{4}$ itself (see \cite{Ue}, \cite{W}, or \cite{W2}).
Thus  oriented $Nil^{4}$ manifolds are only nilmanifolds and so all  oriented $Nil^{4}$ manifolds are not formal.

In the case (4-G) we have $Isom_{g_{Sol^{4}_{1}}}(Sol^{4}_{1})\cong D(4)\ltimes Sol^{4}_{1}$.
For any cocompact discrete subgroup $\Gamma\subset Isom_{g_{Sol^{4}_{1}}}(Sol^{4}_{1})$, since for the projection $p:{\rm Aut} G\ltimes G \to {\rm Aut} (G)$, $p(\Gamma)\subset D(4)$ is finite, we have a subgroup $\Delta\subset \Gamma$ which is a lattice of $ Sol^{4}_{1}$ and we have ${\mathcal H} (Sol^{4}_{1}/\Gamma)= {\mathcal H} (Sol^{4}_{1}/\Delta)^{\Gamma/\Delta}$.
In Example \ref{EXC}, we showed that the metric $g_{Sol^{4}_{1}}$ on the solvmanifold $Sol^{4}_{1}/\Delta$ is formal.
Thus the metric $g_{Sol^{4}_{1}}$ on every $Sol^{4}_{1}$ manifold  is formal.
Hence the theorem follows.

\end{proof}

Finally we prove:
\begin{theorem}
Let $M$ be a compact oriented aspherical manifold of dimension less than or equal to $4$ with the virtually solvable fundamental group.
Then  $M$ is geometrically formal if and only if $M$ is diffeomorphic to a torus or an infra-solvmanifold which is not a nilmanifold.
\end{theorem}
\begin{proof}
It is sufficient to show that $M$ is an infra-solvmanifold if $M$ is geometrically formal.
If $\dim M\le 2$, it is obvious.
If  $\dim M=3$, it follows from Theorem \ref{333}.
We consider $\dim M=4$.
As Remark \ref{InfB}, $M$ is homotopy equivalent to an infra-solvmanifold $G/\Gamma$.
It is known that the Euler characteristic $\chi(G/\Gamma)$ of an infra-solvamanifold is $0$ (see  \cite[Cahpter 8]{Hil}).
Since $G/\Gamma$ is an oriented $4$-manifold, $\chi(G/\Gamma)=0$ implies $b_{1}(G/\Gamma)\not=0$.
Thus we have $b_{1}(M)\not=0$.
If $M$ is geometrically formal, we have a submersion $M\to T^{b_{1}(M)}$ (see \cite[Theorem 7]{Kot}) and hence $M$ is a fiber bundle over a torus $T^{b_{1}(M)}$.
Now  we suppose that $M$ is a compact oriented aspherical manifold of dimension $4$ with the virtually solvable fundamental group.
 By the exact sequence of homotopy groups associated by the fiber bundle, the fiber of $M\to T^{b_{1}(M)}$ is a compact aspherical manifold of dimension less than or equal $3$ with the virtually solvable fundamental group, and hence it is an infra-solvmanifold.
Thus $M$ is a fiber bundle whose fiber is an infra-solvmanifold and base space is a torus.
By  \cite[Theorem 7]{Hil2}, $M$ is diffeomorphic to an infra-solvmanifold with the fundamental group $\pi_{1}(M)$.
\end{proof}

\ \\
{\bf  Acknowledgements.} 
The author would like to express many thanks to the referee for his careful reading of the earlier version of manuscript with several important remarks, which lead to many improvements in the revised version.

\end{document}